\begin{document}

\numberwithin{equation}{section}
\newtheorem{theorem}[equation]{Theorem}
\newtheorem{proposition}[equation]{Proposition}
\newtheorem{problem}[equation]{Problem}
\newtheorem{qu}[equation]{Question}
\newtheorem{corollary}[equation]{Corollary}
\newtheorem{con}[equation]{Conjecture}
\newtheorem{lemma}[equation]{Lemma}
\theoremstyle{definition}
\newtheorem{ex}[equation]{Example}
\newtheorem{defn}[equation]{Definition}
\newtheorem{ob}[equation]{Observation}
\newtheorem{remark}[equation]{Remark}

\hyphenation{homeo-morphism} 

\newcommand{\calA}{\mathcal{A}} 
\newcommand{\calD}{\mathcal{D}} 
\newcommand{\calE}{\mathcal{E}}
\newcommand{\calC}{\mathcal{C}} 
\newcommand{\Set}{\mathcal{S}et\,} 
\newcommand{\Top}{\mathcal{T}\!op \,}
\newcommand{\Topst}{\mathcal{T}\!op\, ^*}
\newcommand{\calK}{\mathcal{K}} 
\newcommand{\calO}{\mathcal{O}} 
\newcommand{\calS}{\mathcal{S}} 
\newcommand{\calT}{\mathcal{T}} 
\newcommand{\Z}{{\mathbb Z}}
\newcommand{\C}{{\mathbb C}}
\newcommand{\Q}{{\mathbb Q}}
\newcommand{\R}{{\mathbb R}}
\newcommand{\N}{{\mathbb N}}
\newcommand{\F}{{\mathcal F}} 
\def\op{\operatorname}

\hfill

\title{A remark on periods  
of periodic sequences modulo $m$
}

\author{Shoji Yokura}

\date{}

\address{Graduate School of Science and Engineering, Kagoshima University, 1-21-35 Korimoto, Kagoshima, 890-0065, Japan
}

\email{yokura@sci.kagoshima-u.ac.jp}

\maketitle

\begin{abstract} Let $\{G_n\}$ be a periodic sequence of integers modulo $m$ and let $\{SG_n\}$ be the partial sum sequence defined by $SG_n:= \sum_{k=0}^nG_k $ (mod $m$).  
We give a formula for the period of $\{SG_n\}$.
We also show that for a generalized Fibonacci sequence $F(a,b)_n$ such that $F(a,b)_0=a$ and $F(a,b)_1=b$, we have 
$$S^i F(a,b)_n  = S^{i-1}F(a,b)_{n+2}-{n+i \choose i-2}a-{n+i \choose i-1}  b$$
where $S^i F(a,b)_n $ is the i-th partial sum sequence successively defined by $S^i F(a,b)_n := \sum_{k=0}^n S^{i-1}F(a,b)_k$. This is a generalized version of the well-known formula 
$$\sum_{k=0}^n F_k = F_{n+2} -1$$
 of the Fibonacci sequence $F_n$.

\end{abstract}


\section{Introduction}
We denote a sequence $\{a_n\}$ simply by $a_n$, without brackets, unless some confusion is possible.
Given a sequence $a_n$, its partial sum sequence $Sa_n$ is defined by 
$$Sa_n:=\sum_{k=0}^n a_k.$$

The Fibonacci sequence $\{F_n \}$ is defined by 
$F_0=0, F_1=1, F_n= F_{n-1}+F_{n-2}\,  (n \geq 2)\footnote{Sometimes the Fibonacci sequence is defined to start at $n=1$ instead of at $n=0$, namely, defined by $F_1=F_2=1$ and $F_n= F_{n-1}+F_{n-2}\,  (n \geq 3)$, e.g., see \cite[Example 4.27]{BG}.}:$
$$0, 1, 1, 2, 3, 5, 8, 13, 21, 34, 55, 89, 144, 233, 377, 610, 987, 1597, 2584, 4181, 6765, \ldots.$$

 It is well-known (e.g., see \cite{FM, Re, Sh, Vi, Wa}) that, for any integer $m \geq 2$, the Fibonacci sequence $F_n$ modulo $m$ is a periodic sequence. For example, for $m=2$ and $m=6$, the Fibonacci sequences $F_n$ modulo $2$ and $6$,  and the partial sum sequences $SF_n$ modulo $2$ and $6$ are the following:
 \begin{itemize}
\item $m=2$: 

$\, \, \, \, F_n : \underbrace{011}\underbrace{011}\underbrace{011}\underbrace{011}, \cdots \cdots$ 

$SF_n : \underbrace{010}\underbrace{010}\underbrace{010}\underbrace{010}, \cdots \cdots$ 

\item $m=6$: 

$\, \, \, \,  F_n: \underbrace{011235213415055431453251}\underbrace{011235213415055431453251}  \cdots \cdots$ 

$SF_n: \underbrace{012410230454432034214050}\underbrace{012410230454432034214050} \cdots \cdots$ 
\end{itemize}
The periods of $F_n$ modulo $2$ and $SF_n$ modulo $2$ are the same number, $3$,  and also the periods of $F_n$ modulo $6$ and $SF_n$ modulo $6$ are the same number, $24$.  In fact,  this holds for any integer $m \geq 2$, as we will see.
Furthermore, if we consider $S^2F_n:=S(SF_n)$ (which will be discussed in Section 4), then the period of $S^2F_n$ is not necessarily the same as that of $F_n$.

Now, let us consider the following same periodic sequence $G_n$ modulo $m=2, 3,4,5,6,7,8$ (its period is 3):
$$G_n:\underbrace{011}\underbrace{011}\underbrace{011} \cdots \cdots$$
Then its partial sum sequence $SG_n$ becomes as follows:
\begin{enumerate}
\item $m=2:SG_n : \underbrace{010}\underbrace{010}\underbrace{010}\underbrace{010}\cdots$; period is $3=3 \times 1$
\item $m=3:SG_n : \underbrace{012201120}\underbrace{012201120}\underbrace{012201120}\underbrace{012201120} \cdots$; period is $9=3 \times 3$.
\item $m=4:SG_n : \underbrace{012230} \underbrace{012230} \underbrace{012230} \underbrace{012230}\cdots$; period is $6 = 3\times 2$.

\item $m=5:SG_n : \underbrace{012234401123340}\underbrace{012234401123340} \cdots$; period is $15=3 \times 5$.

\item $m=6:SG_n : \underbrace{012234450}\underbrace{012234450} \underbrace{012234450} \cdots$; period is $9=3 \times 3$.

\item $m=7:SG_n : \underbrace{012234456601123345560}\cdots$; period is $21=3 \times 7$.

\item $m=8:SG_n : \underbrace{012234456670}\underbrace{012234456670}\cdots$; period is $12=3 \times 4$.
\end{enumerate}
In this note, given any periodic sequence $G_n$ modulo $m$ (not necessarily the Fibonacci sequence modulo $m$), we give a formula of the period of the partial sum sequence $SG_n$ in terms of the period of the original sequence $G_n$.
\begin{theorem} Let $p$ be the period of a periodic sequence $G_n$ modulo $m$. Then the period of the partial sum sequence $SG_n$ modulo $m$ is equal to
$$p \times \op{ord} \bigl (SG_{p-1} \op{mod} m \bigr),$$
where $\op{ord}(x)$ denotes the order of an element $x$ of $\mathbb Z_m$.
In particular, if $m$ is a prime, then the period of the partial sum sequence $SG_n$ modulo $m$ is 
equal to 
$$
\begin{cases}
\quad p \quad \quad \text{if $SG_{p-1} \equiv 0 \,\, \op{mod} \, \, m,$} \\ 
p  \times m \quad \text{if $SG_{p-1} \not \equiv 0 \,\, \op{mod} \, \, m.$}
\end{cases}
$$
\end{theorem}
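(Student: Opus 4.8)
The plan is to reduce everything to a single additive relation between $SG_n$ and $SG_{n+p}$, and then translate the minimal-period condition into a statement about the additive order of the one-period sum in $\mathbb{Z}_m$. Throughout, write $T := SG_{p-1} = \sum_{k=0}^{p-1} G_k \pmod m$ for the sum over one full period, so that $T$ is the quantity whose order appears in the formula.

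First I would establish the key recurrence $SG_{n+p} \equiv SG_n + T \pmod m$ for every $n \geq 0$. This follows by splitting $SG_{n+p} = \sum_{k=0}^{p-1} G_k + \sum_{k=p}^{n+p} G_k$ and reindexing the second sum by $j = k-p$, using the periodicity $G_{j+p} = G_j$ to identify it with $SG_n$. A one-line induction on $r$ then upgrades this to $SG_{n+rp} \equiv SG_n + rT \pmod m$ for all $r \geq 0$.

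Next I would pin down the set of all periods of $SG$. The easy inclusion is that any $rp$ with $rT \equiv 0 \pmod m$ is a period, since the recurrence immediately gives $SG_{n+rp} \equiv SG_n$. The genuinely important step is the reverse: every period $P$ of $SG$ is forced to be a multiple of $p$. For this I use that first differences recover the original sequence, namely $G_n = SG_n - SG_{n-1}$ for $n \geq 1$; if $SG_{n+P} \equiv SG_n$ for all $n$, then $G_{n+P} \equiv G_n$ for all $n \geq 1$, and a short boundary check at $n=0$ (comparing $G_P$, $G_{p+P}$, $G_p$, $G_0$ through the periodicity of $G$) shows $P$ is a genuine period of $G$, whence $p \mid P$ by minimality of $p$.

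Finally, writing $P = rp$, the period condition $SG_{n+P} \equiv SG_n$ is equivalent to $rT \equiv 0 \pmod m$ by the formula from the first step. Hence the minimal period is $p$ times the least $r \geq 1$ with $rT \equiv 0$, which is precisely $p \cdot \op{ord}(T)$ by definition of the additive order in $\mathbb{Z}_m$. The prime case is then immediate: when $m$ is prime, $\op{ord}(T) = 1$ if $T \equiv 0$ and $\op{ord}(T) = m$ otherwise (since every nonzero element of $\mathbb{Z}_m$ has order $m$), giving the stated dichotomy. I expect the only delicate point to be the $n=0$ boundary check inside the ``every period is a multiple of $p$'' step; the rest is routine bookkeeping.
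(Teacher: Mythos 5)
Your proof is correct, and it takes a genuinely cleaner route than the paper while resting on the same two ingredients. The paper proves the statement in two separate cases --- $SG_{p-1} \equiv 0$ (its Proposition 3.1 plus Theorem 3.2) and $SG_{p-1} \not\equiv 0$ (its Theorem 3.3) --- establishing the shift relation only at block endpoints, $SG_{ip-1} \equiv i\, SG_{p-1} \pmod{m}$, by induction on successive $p$-tuples, and pinning down minimality by contradiction: if the period of $SG_n$ were a proper divisor, differencing consecutive partial sums would force $G_n$ to have period smaller than $p$. You instead prove the uniform recurrence $SG_{n+p} \equiv SG_n + T \pmod{m}$ for all $n$ directly by splitting and reindexing the sum, and then characterize the full set of periods of $SG_n$ as $\{\, rp : rT \equiv 0 \pmod m \,\}$; your reverse inclusion (every period of $SG_n$ is a multiple of $p$) is the same differencing idea as the paper's, packaged as a positive lemma rather than inside a contradiction, and your boundary check at $n=0$ is sound: $G_P \equiv G_{P+p} \equiv G_p \equiv G_0 \pmod m$. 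This organization buys a unified statement (the case $T \equiv 0$ is simply $\op{ord}(T)=1$) and the stronger conclusion that you know \emph{all} periods of $SG_n$, not just the least one; what the paper's block-by-block version buys is that the repeating structure of $SG_n$ (each $p$-block is the previous one translated by $T$) stays visible, which it exploits in its examples. In a final write-up, make two things explicit: the convention $\op{ord}(0)=1$, so that the $T \equiv 0$ case really is subsumed by the formula, and the standard fact that the minimal period of a periodic sequence divides every period (a B\'ezout/gcd argument); you invoke this as ``by minimality of $p$,'' and the paper uses it tacitly as well, so it is not a gap, but it deserves a sentence.
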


For example, in the above examples, $SG_{2} \op {mod} m$ is 
equal to $2$ for each $m$, and the order $\op{ord} \bigl (SG_{p-1} \op{mod} m \bigr)$ is as follows:
\begin{enumerate}
\item $m=2$; $\op{ord} \bigl (SG_{2} \op{mod} 2 \bigr)=1$.
\item $m=3$; $\op{ord} \bigl (SG_{2} \op{mod} 3 \bigr)=3$.
\item $m=4$; $\op{ord} \bigl (SG_{2} \op{mod} 4 \bigr)=2$.

\item $m=5$; $\op{ord} \bigl (SG_{2} \op{mod} 5 \bigr)=5$.

\item $m=6$; $\op{ord} \bigl (SG_{2} \op{mod} 6 \bigr)=3$.

\item $m=7$; $\op{ord} \bigl (SG_{2} \op{mod} 7 \bigr)=7$.

\item $m=8$; $\op{ord} \bigl (SG_{2} \op{mod} 8 \bigr)=4$.
\end{enumerate}
\section{General Fibonacci sequence $F(a,b)_n$ modulo $m$}
In this section we observe that the Fibonacci sequence $F_n$ modulo $m$ and its partial sum sequence $SF_n$ modulo $m$ have the same period.

\begin{defn} A sequence $\widetilde F_n$ satisfying 
$$\widetilde F_n =\widetilde F_{n-1} +\widetilde F_{n-2}\quad  (n \geq 2) 
$$
is called \emph{a general Fibonacci sequence} .
\end{defn}

For such a general Fibonacci sequence $\widetilde F_n$, $\widetilde F_0 =a$ and $\widetilde F_1=b$ can be any integers. Therefore, we denote such a general Fibonacci sequence $\widetilde F_n$ by $F(a,b)_n$. In \cite{Kn} it is denoted by $G(a,b,n)$, and the sequence $F(2,1)_n$ is the \emph{Lucas sequence} and is denoted by $L_n$.

For a positive integer $m \geq 2$, a general Fibonacci sequence modulo $m$ is defined as follows.
\begin{defn} Let $a, b \in \mathbb Z_m =\{ 0, 1, 2, \cdots m-1\}$. The sequence $\widetilde F_n$ is defined by
\begin{itemize}
\item $\widetilde F_0=a, \widetilde F_1=b$:
\item$\widetilde F_n \equiv \widetilde F_{n-1} +\widetilde F_{n-2}$ mod $m$ for all $n \geq 2$.
\end{itemize}
\end{defn}
\noindent
Such a sequence modulo $m$ may be denoted by $F(a,b)_n \, (\op{mod} \,  m)$, but to avoid messy notation, we use the same symbol $F(a,b)_n$.
The following periodicity of a general Fibonacci sequence modulo $m$ is well-known (e.g., see \cite{Re, Sh, Vi, Wa}.)
\begin{theorem} For any integer $m \geq 2$, a general Fibonacci sequence $F(a,b)_n$ modulo $m$ is a periodic sequence.
\end{theorem}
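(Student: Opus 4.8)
The plan is to combine the pigeonhole principle with the reversibility of the Fibonacci recurrence. First I would track the sequence not term by term but through its consecutive pairs
$$P_n := \bigl( F(a,b)_n,\ F(a,b)_{n+1} \bigr) \in \Z_m \times \Z_m.$$
The key structural observation is that, modulo $m$, the recurrence is completely determined by two consecutive terms, so $P_n$ determines $P_{n+1}$ via $P_{n+1} = \bigl(F(a,b)_{n+1},\ F(a,b)_n + F(a,b)_{n+1}\bigr)$, and hence $P_n$ determines the entire tail of the sequence. Since $\Z_m \times \Z_m$ has only $m^2$ elements, among the $m^2+1$ pairs $P_0, P_1, \dots, P_{m^2}$ two must coincide, say $P_i = P_j$ with $0 \le i < j \le m^2$.

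Second, I would note that because $P_n$ determines $P_{n+1}$, the equality $P_i = P_j$ propagates forward: $P_n = P_{n + (j-i)}$ for every $n \ge i$. This already gives that the sequence is \emph{eventually} periodic with period dividing $j-i$, but it leaves open the possibility of a nontrivial pre-period before $n = i$.

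Third — and this is the step that upgrades eventual periodicity to genuine periodicity from the very start — I would exploit that the recurrence is reversible modulo $m$. Rewriting $F(a,b)_n \equiv F(a,b)_{n-1} + F(a,b)_{n-2}$ as
$$F(a,b)_{n-2} \equiv F(a,b)_n - F(a,b)_{n-1} \pmod m,$$
the pair $P_n$ also determines $P_{n-1}$. Thus the equality $P_i = P_j$ can be propagated \emph{backward} as well, and applying the backward step $i$ times yields $P_0 = P_{j-i}$. Setting $p := j-i \ge 1$, this says $F(a,b)_0 = F(a,b)_p$ and $F(a,b)_1 = F(a,b)_{p+1}$, after which the forward recurrence forces $F(a,b)_n = F(a,b)_{n+p}$ for all $n \ge 0$. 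Hence the sequence is purely periodic.

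The main obstacle is exactly the distinction between eventual periodicity, which the pigeonhole argument supplies for free, and periodicity from the outset, which is what the statement requires. The reversibility of the recurrence modulo $m$ is what removes any possible pre-period, so I expect that the crux of the write-up will be stating the backward recurrence cleanly and arguing that equal pairs can be pulled all the way back to the index $0$.
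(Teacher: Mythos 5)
Your proposal is correct, but there is nothing in the paper to compare it against: the paper does not prove this theorem at all, stating it as well-known and citing the literature (the references of Wall, Shah, Vince, and Renault). Your argument is precisely the classical proof that appears in those sources, and every step checks out: (i) pigeonhole on the $m^2$ possible consecutive pairs $P_n=\bigl(F(a,b)_n, F(a,b)_{n+1}\bigr)$ forces $P_i=P_j$ for some $0\le i<j\le m^2$; (ii) since $P_n$ determines $P_{n+1}$ modulo $m$, the coincidence propagates forward, giving eventual periodicity; (iii) since the recurrence is reversible modulo $m$ --- $F(a,b)_{n-2}\equiv F(a,b)_n - F(a,b)_{n-1} \ (\op{mod}\ m)$, so $P_n$ also determines $P_{n-1}$ --- equal pairs have equal predecessors, and applying this $i$ times pulls the coincidence back to $P_0=P_{j-i}$, whence $F(a,b)_n\equiv F(a,b)_{n+p}$ for all $n\ge 0$ with $p=j-i$. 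You are also right to flag (iii) as the crux: (i) and (ii) alone give only eventual periodicity, while the theorem, and everything built on it in the paper (which works with sequences periodic from index $0$), requires no pre-period. It is worth noting that the backward step uses only subtraction, never division, so no invertibility hypothesis on $m$ is needed and the argument covers every $m\ge 2$; this is exactly why the same proof fails for recurrences like $x_n = c\, x_{n-1} + x_{n-2}$ only when one tries to reverse a leading coefficient that is not a unit in $\mathbb Z_m$, a subtlety that does not arise here.
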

The period of a general Fibonacci sequence $F(a,b)_n$ modulo $m$ shall be denoted by \newline $\Pi(F(a,b)_n, m)$. For the Fibonacci sequence $F_n$ modulo $m$, the period $\Pi(F_{n},m)$ is called the \emph{Pisano\footnote{Pisano is another name for Fibonacci (cf. Wikipedia \cite{FW}).}  period} of $m$ and is denoted by $\pi(m)$.
For some interesting results of the Pisano period $\pi(m)$, see, e.g., \cite{Re}. For example, one can see a list of the periods $\pi(m)$ for $2 \leq m \leq 2001$; for instance,
$$\pi(10)=60, \pi(25)=100, \pi(98)=336, \pi(250)=1500, \pi(500)=1500, $$
$$\pi(625)=2500, \pi(750)=3000, \pi(987)=32, \pi(1250)=7500,$$
$$  \pi(1991)=90, \pi(2000)=3000, \pi(2001)=336.$$

As to relations between $\pi(m)$ and $m$, 
the following are known.
\begin{theorem}[Freyd and Brown \cite{Br}] 
\begin{enumerate}
\item $\pi(m) \leq 6m$ with equality if and only if $m =2 \times 5^k (k=1, 2, 3, \cdots).$
\item For the Lucas number $L_{n}$, $\Pi(L_n, m) \leq 4m$ with equality if and only if $m=6$.
\end{enumerate}
\end{theorem}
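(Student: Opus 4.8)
The plan is to reduce everything to prime powers and then to the multiplicative order of the companion matrix $Q=\begin{pmatrix}1&1\\1&0\end{pmatrix}$. First I would record the multiplicativity of the period: by the Chinese Remainder Theorem, if $m=\prod_j p_j^{e_j}$, then a general Fibonacci sequence modulo $m$ is recovered from its reductions modulo the $p_j^{e_j}$, whence $\pi(m)=\operatorname{lcm}_j\pi(p_j^{e_j})$ and likewise $\Pi(L_n,m)=\operatorname{lcm}_j\Pi(L_n,p_j^{e_j})$ (periodicity itself being Theorem 2.3). It therefore suffices to control the ratios on prime powers. Writing $Q^n=\begin{pmatrix}F_{n+1}&F_n\\F_n&F_{n-1}\end{pmatrix}$, the Pisano period $\pi(p^e)$ is exactly the order of $Q$ in $\mathrm{GL}_2(\mathbb{Z}/p^e\mathbb{Z})$, and $L_n=\operatorname{tr}(Q^n)=\phi^n+\psi^n$, where $\phi,\psi$ are the roots of $x^2-x-1$ (so $\phi+\psi=1$, $\phi\psi=-1$).

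I would then bound $\pi(p)$ by the splitting type of $x^2-x-1$ modulo $p$, dictated by whether $5$ is a quadratic residue: (i) $p=2$ gives $\pi(2)=3$; (ii) $p=5$ is ramified, $x^2-x-1\equiv(x-3)^2\pmod 5$, giving $\pi(5)=20=4\cdot 5$; (iii) if $p\equiv\pm1\pmod5$ then $\phi,\psi\in\mathbb{F}_p^\times$ and $\pi(p)=\operatorname{lcm}(\operatorname{ord}\phi,\operatorname{ord}\psi)\mid p-1$; (iv) if $p\equiv\pm2\pmod5$ then $\phi,\psi\in\mathbb{F}_{p^2}$ are Frobenius-conjugate, so $\phi^{p+1}=\phi\psi=-1$ and $\pi(p)\mid 2(p+1)$. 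Lifting to prime powers is a binomial estimate: from $Q^{\pi(p)}=I+pN$ one gets $Q^{p^{e-1}\pi(p)}\equiv I\pmod{p^e}$, hence $\pi(p^e)\mid p^{e-1}\pi(p)$. Together these give $\pi(2^e)/2^e=\tfrac32$, $\pi(5^e)/5^e=4$, $\pi(p^e)/p^e<1$ for $p\equiv\pm1$, and $\pi(p^e)/p^e\le 2(p+1)/p<3$ for $p\equiv\pm2$.

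The crux is assembling these via the least common multiple to prove $\pi(m)=\operatorname{lcm}_j\pi(p_j^{e_j})\le 6m$, with equality exactly at $m=2\cdot5^k$. The ratio $4$ is carried only by the factor $5^k$, and any further enlargement of the period comes from an lcm with some $\pi(p^e)$, which costs $p^e$ in $m$; the cheapest way to pick up a factor coprime to $5^k$ is the prime $2$, because $\pi(2)=3$ is coprime to $\pi(5^k)=4\cdot5^k$ and gives $\operatorname{lcm}(3,4\cdot5^k)=12\cdot5^k=6\cdot(2\cdot5^k)$. Turning this heuristic into a proof is the main obstacle: the naive bound $\operatorname{lcm}\le\prod$ is useless once many primes $p\equiv\pm2\pmod5$ occur, since $\prod(1+1/p)$ diverges, so genuine lcm cancellation must be exploited. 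The clean device is the rank of apparition $\alpha(m)$ (the least $n>0$ with $m\mid F_n$), which satisfies $\alpha(p)\mid p-1$ or $p+1$ for $p\neq5$, is lcm-multiplicative, and obeys $\pi(m)/\alpha(m)\in\{1,2,4\}$; the problem then reduces to a sharp bound on $\operatorname{lcm}_j\alpha(p_j^{e_j})/\prod_j p_j^{e_j}$, and chasing when every inequality is simultaneously tight pins the equality case to $m=2\cdot5^k$.

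For (2) I would rerun the same analysis on $L_n=\phi^n+\psi^n$. Since $L_n$ is a function of the state $(F_n,F_{n+1})$, its period $\Pi(L_n,m)$ divides $\pi(m)$ and is again an lcm over prime powers, so I only need the per-prime ratios. These differ from the Fibonacci case: for instance $\phi\equiv\psi\equiv3\pmod5$ makes $L_n\equiv2\cdot3^n$, so $\Pi(L_n,5)=\operatorname{ord}(3\bmod5)=4$ (ratio $<1$), while $\Pi(L_n,2^e)/2^e=\tfrac32$ and the dominant ratio is now $\Pi(L_n,3^e)/3^e=\tfrac83$. Because $\Pi(L_n,2)=3$ and $\Pi(L_n,3)=8$ are coprime, $\operatorname{lcm}(3,8)=24=4\cdot6$, so the maximal ratio $4$ is attained at $m=6$; the remaining work—showing no other combination reaches $4$—is the same kind of lcm bookkeeping as before, but milder, since the extremal ratios $\tfrac32$ and $\tfrac83$ are smaller.
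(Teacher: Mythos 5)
A preliminary remark: the paper itself offers no proof of this theorem --- it is quoted from Freyd and Brown's Monthly problem E3410 \cite{Br} --- so your argument must stand entirely on its own. Its prime-power ingredients are all correct and standard: the CRT identity $\pi(m)=\operatorname{lcm}_j\pi(p_j^{e_j})$, the splitting analysis of $x^2-x-1$ modulo $p$ giving $\pi(2)=3$, $\pi(5^e)=4\cdot 5^e$, $\pi(p)\mid p-1$ for $p\equiv\pm1\pmod 5$ and $\pi(p)\mid 2(p+1)$ for $p\equiv\pm2\pmod 5$, the binomial lifting $\pi(p^e)\mid p^{e-1}\pi(p)$, and the verification $\pi(2\cdot 5^k)=\operatorname{lcm}(3,4\cdot 5^k)=6\cdot(2\cdot 5^k)$, which settles the ``if'' half of the equality claim; likewise the Lucas data $\Pi(L_n,2)=3$, $\Pi(L_n,3)=8$, $\Pi(L_n,5)=4$ and $\operatorname{lcm}(3,8)=24=4\cdot 6$.

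The genuine gap is exactly the step you yourself flag as ``the main obstacle,'' and it is not a technicality: it \emph{is} the theorem. Nothing in the proposal proves $\operatorname{lcm}_j\pi(p_j^{e_j})\le 6\prod_j p_j^{e_j}$ for general composite $m$, nor that equality forces $m=2\cdot 5^k$, nor (in part (2)) that no $m$ other than $6$ attains $4m$. As you correctly observe, the per-prime ratios cannot be multiplied, since $\prod_p(2+2/p)$ over inert primes diverges; but the replacement you propose --- ``a sharp bound on $\operatorname{lcm}_j\alpha(p_j^{e_j})/\prod_j p_j^{e_j}$'' --- is never stated, let alone proved, and the reduction to it is itself lossy as written. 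Concretely: $\alpha(6)=12=2\cdot 6$, so combining a bound on $\alpha$ with $\pi(m)\le 4\alpha(m)$ (from $\pi/\alpha\in\{1,2,4\}$) gives only $\pi(6)\le 48>6\cdot 6$; one needs in addition Vinson-type information that the factor $4$ occurs only when $\alpha(m)$ is odd, and then a correlation of the parity and $5$-adic structure of the numbers $\alpha(p_j^{e_j})$ (equivalently, of $p_j\pm 1$) across the prime factors of $m$ to extract the necessary cancellation in the lcm and to pin down when every inequality is simultaneously tight. That bookkeeping is precisely the content of the Freyd--Brown result, and it is absent here, for both parts. Until it is written out, what you have is an accurate and well-informed plan of attack, not a proof.
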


Given a sequence $a_n$, the partial sum sequence $b_n$ defined by $b_n:= \sum_{k=0}^n a_k$ shall be called \emph{the first derived sequence}, instead of ``partial sum sequence''. 
The first derived sequence of a general Fibonacci sequence $F(a,b)_n$ shall be denoted by $SF(a,b)_n:= \sum_{k=0}^{n} F(a,b)_k$.

\begin{lemma} \label{SF-F} For a general Fibonacci sequence $F(a,b)_n$ the following holds:
\begin{equation}\label{SF}
SF(a,b)_n = F(a,b)_{n+2} - b.
\end{equation} 
\end{lemma}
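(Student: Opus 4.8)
The plan is to prove the identity by induction on $n$, using only the defining recurrence of $F(a,b)_n$. The base case is a direct computation: for $n=0$ we have $SF(a,b)_0 = F(a,b)_0 = a$, while the right-hand side reads $F(a,b)_2 - b = (a+b) - b = a$, since $F(a,b)_2 = F(a,b)_1 + F(a,b)_0 = b + a$. Thus the two sides agree at $n=0$.

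For the inductive step I would assume $SF(a,b)_n = F(a,b)_{n+2} - b$ and compute $SF(a,b)_{n+1}$ by peeling off the last summand:
$$SF(a,b)_{n+1} = SF(a,b)_n + F(a,b)_{n+1} = \bigl(F(a,b)_{n+2} - b\bigr) + F(a,b)_{n+1}.$$
Applying the recurrence $F(a,b)_{n+3} = F(a,b)_{n+2} + F(a,b)_{n+1}$ collapses the first and third terms into $F(a,b)_{n+3}$, giving $SF(a,b)_{n+1} = F(a,b)_{n+3} - b$, which is exactly the claimed formula with $n$ replaced by $n+1$. This completes the induction.

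Alternatively, a short telescoping argument yields the same conclusion directly. Rewriting the recurrence as $F(a,b)_k = F(a,b)_{k+2} - F(a,b)_{k+1}$ and summing over $0 \le k \le n$ turns $SF(a,b)_n$ into a telescoping sum whose value is $F(a,b)_{n+2} - F(a,b)_1 = F(a,b)_{n+2} - b$, after a careful index shift in the two resulting partial sums.

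I do not anticipate a genuine obstacle here: the result is an elementary consequence of the linear recurrence. The only point demanding slight care is the bookkeeping — either verifying that the base case at $n=0$ uses $F(a,b)_2 = a+b$ correctly, or, in the telescoping version, matching up the index ranges so that the surviving boundary term is precisely $-F(a,b)_1 = -b$. Both approaches work verbatim over $\mathbb Z$ and over $\mathbb Z_m$, so the same proof covers the modular version introduced in the preceding definition.
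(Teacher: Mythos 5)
Your proposal is correct and its main argument (induction on $n$, with base case $SF(a,b)_0 = F(a,b)_2 - b$ and the inductive step using the recurrence to collapse $F(a,b)_{n+2} + F(a,b)_{n+1}$ into $F(a,b)_{n+3}$) is essentially identical to the paper's own proof. The telescoping variant you sketch is a valid alternative, but the induction is the route the paper takes.
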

\begin{proof} 
We have $SF(a,b)_0=F(a,b)_0 = F(a,b)_2 - F(a,b)_1 =F(a,b)_2 - b.$
So, we suppose that
$SF(a,b)_n=F(a,b)_{n+2} - b.$
Then 
\begin{align*}
SF(a,b)_{n+1} & =SF(a,b)_n + F(a,b)_{n+1}\\
& = F(a,b)_{n+2}-b + F(a,b)_{n+1}\\
& = F(a,b)_{n+2} +F(a,b)_{n+1} - b\\
&= F(a,b)_{n+3} - b\\
& = F(a,b)_{(n+1)+2} -b.
\end{align*}
\end{proof}

Equation (\ref{SF}) 
is a generalized version of 
the following well-known formula of  the Fibonacci sequence \cite{Lu}:
$$ \sum_{k=0}^n F_k = F_{n+2} -1.$$
From Equation (\ref{SF}), we get the following 
\begin{corollary}\label{cor} For any positive integer $m \geq 2$, a general Fibonacci sequence $F(a,b)_n$ modulo $m$ and its first derived Fibonacci sequence $SF(a,b)_n$ modulo $m$ have the same period:
$$\Pi (F(a,b)_n, m ) = \Pi (SF(a,b)_n, m ).$$
\end{corollary}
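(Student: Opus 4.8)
The plan is to derive the equality of periods directly from the key identity established in Lemma \ref{SF-F}, namely $SF(a,b)_n = F(a,b)_{n+2} - b$ taken modulo $m$. The strategy is to show each period divides the other, which gives equality. Since $b$ is a fixed constant independent of $n$, the first derived sequence $SF(a,b)_n$ is, up to a constant shift by $-b$, just the original sequence $F(a,b)_n$ re-indexed by two positions. A constant shift does not affect periodicity, and a fixed re-indexing preserves periods, so both directions should follow almost mechanically.

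First I would show $\Pi(SF(a,b)_n, m)$ divides $\Pi(F(a,b)_n, m)$. Write $p := \Pi(F(a,b)_n, m)$, so that $F(a,b)_{n+p} \equiv F(a,b)_n \pmod m$ for all $n$. Then using the identity,
\begin{align*}
SF(a,b)_{n+p} &= F(a,b)_{n+p+2} - b \\
&\equiv F(a,b)_{n+2} - b \\
&= SF(a,b)_n \pmod m,
\end{align*}
which shows $p$ is a period of $SF(a,b)_n$, hence $\Pi(SF(a,b)_n, m) \mid p$.

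Conversely, I would show $\Pi(F(a,b)_n, m)$ divides $\Pi(SF(a,b)_n, m)$. Set $q := \Pi(SF(a,b)_n, m)$, so $SF(a,b)_{n+q} \equiv SF(a,b)_n \pmod m$ for all $n$. Applying the identity to both sides and cancelling the common constant $-b$, we get $F(a,b)_{n+q+2} \equiv F(a,b)_{n+2} \pmod m$ for all $n \geq 0$; since $n+2$ ranges over all integers $\geq 2$ and the recurrence lets us run the Fibonacci relation backwards (the map $(F_{n-1}, F_n) \mapsto (F_{n-2}, F_{n-1})$ is invertible modulo $m$ because $F_{n-2} = F_n - F_{n-1}$), the congruence in fact holds for all $n \geq 0$. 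Thus $q$ is a period of $F(a,b)_n$, giving $\Pi(F(a,b)_n, m) \mid q$. Combining the two divisibilities yields $\Pi(F(a,b)_n, m) = \Pi(SF(a,b)_n, m)$.

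The only point requiring care — and the main potential obstacle — is the converse direction, where the identity gives agreement of $F(a,b)$ only at indices $\geq 2$, so one must argue that the shared period propagates down to indices $0$ and $1$. This is handled by the invertibility of the two-term recurrence modulo $m$ noted above, which lets a period observed from index $2$ onward be pulled back to the start; one should state this explicitly rather than treat it as automatic. Everything else is a routine substitution using Lemma \ref{SF-F}.
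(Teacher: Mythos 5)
Your proposal is correct and takes essentially the same route as the paper: the paper presents this corollary as an immediate consequence of Lemma~\ref{SF-F} (Equation~(\ref{SF})) without writing out the details, and your two divisibility arguments are exactly the fleshed-out version of that derivation. Your explicit treatment of the converse direction---propagating the period from indices $\geq 2$ back to indices $0$ and $1$ via the invertible recurrence $F(a,b)_n = F(a,b)_{n+2} - F(a,b)_{n+1}$---correctly handles the one subtle point the paper leaves implicit.
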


\begin{lemma}\label{lemma} If $\Pi (F(a,b)_n, m ) =p$, then we have
\begin{equation}\label{mod m}
SF(a,b)_{p-1} = \sum_{k=0}^{p-1} F(a,b)_k \equiv 0  \, \, \op{mod} m.
\end{equation}
\end{lemma}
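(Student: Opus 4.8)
The plan is to reduce everything to Lemma \ref{SF-F}, which already encodes the summation in closed form, and then to feed in the periodicity hypothesis. First I would specialize equation (\ref{SF}) to $n = p-1$, which immediately yields
$$SF(a,b)_{p-1} = F(a,b)_{p+1} - b.$$
This converts the claim $SF(a,b)_{p-1} \equiv 0 \pmod m$ into the equivalent assertion that $F(a,b)_{p+1} \equiv b \pmod m$; that is, the value of the sequence at index $p+1$ must agree with its value $b = F(a,b)_1$ at index $1$. So the entire lemma collapses to understanding how the hypothesis $\Pi(F(a,b)_n, m) = p$ constrains the sequence at shifted indices.

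Next I would invoke the periodicity. By definition the period $p$ means $F(a,b)_{n+p} \equiv F(a,b)_n \pmod m$ for all admissible $n$; applying this with $n = 1$ gives $F(a,b)_{p+1} \equiv F(a,b)_1 = b \pmod m$. Substituting this into the displayed identity produces
$$SF(a,b)_{p-1} \equiv F(a,b)_{p+1} - b \equiv b - b = 0 \pmod m,$$
which is exactly (\ref{mod m}).

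The one point that deserves care — though I do not expect it to be a genuine obstacle — is justifying that $F(a,b)_n$ modulo $m$ is \emph{purely} periodic from index $0$, so that the relation $F(a,b)_{n+p} \equiv F(a,b)_n$ is valid already at $n = 0, 1$ rather than only for sufficiently large $n$. This follows from the fact that the state map $(x,y) \mapsto (y, x+y)$ on $\mathbb Z_m \times \mathbb Z_m$ is a bijection, with inverse $(u,v) \mapsto (v-u, u)$; consequently the orbit of the initial pair $(F(a,b)_0, F(a,b)_1) = (a,b)$ is a genuine cycle that returns exactly to $(a,b)$ after $p$ steps, giving $(F(a,b)_p, F(a,b)_{p+1}) \equiv (a, b)$ and in particular the needed $F(a,b)_{p+1} \equiv b$. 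With pure periodicity in hand, the proof is essentially a one-line consequence of Lemma \ref{SF-F}.
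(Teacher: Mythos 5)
Your proof is correct and is essentially the same as the paper's own first proof of this lemma: both specialize Lemma \ref{SF-F} to $n=p-1$ and then use $F(a,b)_{p+1}\equiv F(a,b)_1=b \pmod m$ to conclude. Your extra remark justifying pure periodicity from index $0$ (via the invertibility of the state map $(x,y)\mapsto(y,x+y)$ on $\mathbb Z_m\times\mathbb Z_m$) is a sound addition that the paper leaves implicit, but it does not change the route.
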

\begin{proof} There are two ways to prove this. 

(I) It follows from Lemma \ref{SF-F} that
$SF(a,b)_{p-1} = F(a,b)_{p+1}-b.$
Since the period of the general Fibonacci sequence $F(a,b)_n$ modulo $m$ is $p$, we have 
$F(a,b)_{p+1} \equiv F(a,b)_1.$
Hence,
\begin{align*}
SF(a,b)_{p-1} & =F(a,b)_{p+1}-b\\
& \equiv F(a,b)_1 -b = 0 \, \, \op{mod} m.
\end{align*}

(II) By Corollary \ref{cor} $\Pi (SF(a,b)_n, m )=p$. Hence,
$SF(a,b)_0 \equiv SF(a,b)_p$, namely,
$$F(a,b)_0 \equiv \sum_{k=0}^pF(a,b)_k = \sum_{k=0}^{p-1}F(a,b)_k + F(a,b)_p.$$
Since $F(a,b)_0 \equiv F(a,b)_p \,\, \op{mod} \, \, m$, we get
$$ SF_{p-1}= \sum_{k=0}^{p-1}F(a,b)_k \equiv 0 \, \, \op{mod} \, \, m.$$
\end{proof}
Indeed, in the following examples we do see the above mod formula (\ref{mod m}).
\begin{ex} \label{ex1} For the Fibonacci sequence $F_n$ modulo $m$
\begin{enumerate}
\item $m=2$: $\pi(2) =3$.
$\underbrace{011}\underbrace{011}\underbrace{011}\underbrace{011}\cdots \cdots$ 
\item $m=3$: $\pi(3) =8$.
$\underbrace{01120221}\underbrace{01120221}\underbrace{01120221}\underbrace{01120221} \cdots \cdots$ 
\item $m=4$: $\pi(4) =6$.
\, $\underbrace{011231}\underbrace{011231}\underbrace{011231}\underbrace{011231}\cdots \cdots$ 
\item $m=5$: $\pi(5) =20$.
$\underbrace{01123033140443202241}\underbrace{01123033140443202241} \cdots \cdots$ 
\item $m=6$: $\pi(6) =24$.
$\underbrace{011235213415055431453251}\underbrace{011235213415055431453251} \cdots$ 
\item $m=7$: $\pi(7) =16$.
$\underbrace{0112351606654261}\underbrace{0112351606654261}\underbrace{0112351606654261}\cdots$ 
\item $m=8$: $\pi(8) =12$.
$\underbrace{011235055271}\underbrace{011235055271}\underbrace{011235055271}\underbrace{011235055271}\cdots$ 
\end{enumerate}
\end{ex}

\section{Periods of periodic sequences modulo $m$}
The first derived sequence $SF(a,b)_n$ modulo $m$ of a general Fibonacci sequence $F(a,b)$ modulo $m$ is periodic and has the same period as that of the general Fibonacci sequence $F(a,b)$ modulo $m$. As seen in the introduction, it is not the case for an arbitrary periodic sequence modulo $m$. So, in this section we consider periods of the first derived sequence of a periodic sequence modulo $m$.

\begin{proposition} \label{prop} Let $G_n$ be a periodic sequence 
modulo $m$ with period $p$. If 
$SG_{p-1}  \equiv 0 \, \, \op{mod} m,$
then for any $n$ we have
$SG_n \equiv SG_{n+p} \op{mod} m.$
\end{proposition}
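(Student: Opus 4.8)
The plan is to compute the difference $SG_{n+p}-SG_n$ directly and show that it vanishes modulo $m$, using only the periodicity of $G_n$ together with the hypothesis on $SG_{p-1}$. First I would observe that, for any $n\geq 0$,
$$SG_{n+p}-SG_n=\sum_{k=0}^{n+p}G_k-\sum_{k=0}^{n}G_k=\sum_{k=n+1}^{n+p}G_k,$$
so that the difference is exactly the sum of $G$ over a window of $p$ consecutive indices. The whole content of the proposition is then the claim that such a window sum is, modulo $m$, independent of where the window starts.

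For the key step, since $G_n$ has period $p$ the map $j\mapsto (n+1+j)\bmod p$ is a bijection of $\{0,1,\dots,p-1\}$ onto itself, and $G_{n+1+j}\equiv G_{(n+1+j)\bmod p}\pmod m$. Hence the window sum is a full-period sum in disguise:
$$\sum_{k=n+1}^{n+p}G_k=\sum_{j=0}^{p-1}G_{n+1+j}\equiv\sum_{r=0}^{p-1}G_r=SG_{p-1}\pmod m.$$
Combining the two displays gives $SG_{n+p}-SG_n\equiv SG_{p-1}\pmod m$, and the hypothesis $SG_{p-1}\equiv 0\ \op{mod}\ m$ finishes the argument.

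Alternatively, one can argue by induction on $n$ in the style of Lemma~\ref{SF-F}. The base case $n=0$ reads $SG_p=SG_{p-1}+G_p\equiv 0+G_0=SG_0$, which is precisely where the hypothesis $SG_{p-1}\equiv 0$ and the periodicity $G_p\equiv G_0$ enter. The inductive step is immediate from $SG_{(n+1)+p}=SG_{n+p}+G_{n+1+p}$ and $SG_{n+1}=SG_n+G_{n+1}$, invoking the induction hypothesis and $G_{n+1+p}\equiv G_{n+1}$. I do not expect a serious obstacle here: the only point requiring care is the window-sum identity, namely that a sum of $p$ consecutive terms of a period-$p$ sequence is congruent to the single full-period sum $SG_{p-1}$ regardless of the starting index. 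It is worth noting that the inductive step never uses the hypothesis — the entire force of the assumption $SG_{p-1}\equiv 0$ is concentrated in making that window sum vanish.
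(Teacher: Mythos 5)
Your proposal is correct, and its primary argument is genuinely different from the paper's. The paper proves Proposition \ref{prop} by induction on $n$: the base case $SG_0 \equiv SG_p \pmod m$ combines the hypothesis $SG_{p-1}\equiv 0$ with the periodicity $G_p \equiv G_0$, and the inductive step adds $G_{n+1}\equiv G_{n+1+p}$ to both sides of $SG_n \equiv SG_{n+p}$ --- this is precisely your ``alternative'' second argument, so that part of your proposal coincides with the paper's proof. Your main argument instead establishes the \emph{unconditional} congruence $SG_{n+p}-SG_n=\sum_{k=n+1}^{n+p}G_k \equiv SG_{p-1} \pmod m$ (a window of $p$ consecutive terms of a period-$p$ sequence is congruent to the full-period sum, because $p$ consecutive indices represent each residue class mod $p$ exactly once), and only then invokes the hypothesis to kill $SG_{p-1}$. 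That is a strictly stronger statement than the proposition, and it buys something concrete later in the paper: telescoping it gives $SG_{ip-1}\equiv i\times SG_{p-1} \pmod m$ immediately, a fact the paper re-derives by a separate $p$-tuple induction in the theorem handling the case $SG_{p-1}\not\equiv 0 \pmod m$. Your closing observation --- that the inductive step never uses the hypothesis, whose entire force is concentrated in the window sum (equivalently, in the base case) --- is exactly right, and it explains why the paper's induction likewise only needs $SG_{p-1}\equiv 0$ to get started. The paper's route is marginally more self-contained (no permutation-of-residues step), while yours localizes the role of the hypothesis and reuses better downstream.
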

\begin{proof}
The period of $G_n$ is $p$, thus $G_0 \equiv G_p $. Since $SG_0 = G_0$, we have
$SG_0 \equiv G_p \, \, \op{mod} m.$
Since $SG_{p-1}  =\sum_{k=0}^{p-1} G_{k} \equiv 0 \, \, \op{mod} m$, 
$$SG_0 \equiv SG_{p-1}  + G_p  = SG_{p} \, \, \op{mod} m.$$
So, suppose that for $n$ we have
$SG_n \equiv SG_{n+p}$.
Then 
\begin{align*}
SG_{n+1} & \equiv SG_n + G_{n+1} \, \, \op{mod} m\\
& \equiv SG_{n+p} + G_{n+1+p} \, \, \op{mod} m \, \, \text{(since $G_{n+1} \equiv G_{n+1+p} \, \, \op{mod} m$)}\\
& \equiv SG_{n+1+p} \, \, \op{mod} m.
\end{align*}
Hence, by induction,  we get the statement of the proposition.
\end{proof}
Here it should be noticed  that from Proposition \ref{prop} one \emph{cannot} automatically claim that the period of the first derived sequence $SG_n$ modulo $m$ is $p$; one can claim only that the period of $SG_n$ is \emph{a divisor of the period of the original sequence $G_n$}:
$$\Pi(SG_n, m) \, \Bigl | \, \Pi(G_n, m).$$
In fact, we can show 
\begin{theorem}\label{key theorem} Let $G_n$ be a periodic sequence 
modulo $m$ with period $p$. If 
$SG_{p-1}  \equiv 0 \, \, \op{mod} \, \, m$, then $\Pi(SG_n, m) =\Pi(G_n, m).$
\end{theorem}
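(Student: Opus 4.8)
The plan is to combine Proposition \ref{prop} with a ``difference'' argument that recovers $G_n$ from the derived sequence $SG_n$. Proposition \ref{prop} already gives one direction: under the hypothesis $SG_{p-1} \equiv 0$, the sequence $SG_n$ satisfies $SG_{n+p} \equiv SG_n$ for all $n$, so its minimal period $q := \Pi(SG_n, m)$ divides $p$. Thus I may write $p = qt$ for some positive integer $t$, and the whole task reduces to proving $t = 1$, i.e. $p \mid q$, which together with $q \mid p$ forces $q = p$.

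The crucial preliminary observation is that the hypothesis $SG_{p-1} \equiv 0$ actually forces the seemingly more specific statement $SG_{q-1} \equiv 0 \, \op{mod} m$. Indeed, since $SG_n$ has period $q$ and $p - 1 = qt - 1 \equiv q - 1 \, \op{mod} q$, periodicity yields $SG_{q-1} \equiv SG_{p-1} \equiv 0 \, \op{mod} m$. This is the step that resolves an apparent circularity: showing $G_q \equiv G_0$ directly turns out to be \emph{equivalent} to $SG_{q-1} \equiv 0$, so the point is to derive the latter from the hypothesis $SG_{p-1}\equiv 0$ rather than from what we are trying to prove.

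Next I would recover $G_n$ as the difference sequence of $SG_n$: by definition $G_0 = SG_0$ and $G_n = SG_n - SG_{n-1}$ for $n \geq 1$. For every $n \geq 1$, periodicity of $SG_n$ with period $q$ gives $G_{n+q} = SG_{n+q} - SG_{n+q-1} \equiv SG_n - SG_{n-1} = G_n \, \op{mod} m$. For the remaining index $n = 0$ I use the preliminary observation: $G_q = SG_q - SG_{q-1} \equiv SG_0 - 0 = G_0 \, \op{mod} m$. Hence $G_{n+q} \equiv G_n$ for all $n \geq 0$, so $q$ is a period of $G_n$; since $p$ is the \emph{minimal} period of $G_n$, we conclude $p \mid q$.

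Combining $q \mid p$ with $p \mid q$ gives $q = p$, that is $\Pi(SG_n, m) = \Pi(G_n, m)$. I expect the main obstacle to be precisely the $n = 0$ boundary case: the difference argument is transparent for $n \geq 1$, but closing the period up at the wrap-around point is exactly where the hypothesis is needed, and the naive attempt to verify $G_q\equiv G_0$ is circular until one transports $SG_{p-1}\equiv 0$ down to $SG_{q-1}\equiv 0$ by reducing the index $p-1$ modulo $q$.
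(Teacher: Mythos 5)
Your proof is correct, and it buys a real simplification at the one step where the theorem has content. The skeleton is the same as the paper's: both proofs invoke Proposition \ref{prop} to get $q:=\Pi(SG_n,m)\mid p$, and both recover $G_n$ as the difference sequence of $SG_n$ in order to push periodicity of $SG_n$ back onto $G_n$, the only delicate point being the wrap-around index $n=0$. There the two arguments genuinely diverge. The paper argues by contradiction and never transports the hypothesis down to index $q-1$: writing $d=\Pi(SG_n,m)$ and $p=dp_0$, it derives $G_n\equiv G_{n+d}$ for $n\geq 1$ by subtracting the congruences $SG_n\equiv SG_{n+d}$, and then closes the loop by chaining $G_d\equiv G_{2d}\equiv\cdots\equiv G_{p_0d}=G_p$ and invoking the periodicity $G_p\equiv G_0$ of the original sequence. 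Your index-reduction observation, namely $p-1\equiv q-1 \, \, \op{mod} \, \, q$ and hence $SG_{q-1}\equiv SG_{p-1}\equiv 0 \, \, \op{mod} \, \, m$, replaces that whole chain by a single congruence, after which $G_q\equiv SG_q-SG_{q-1}\equiv SG_0=G_0$ closes the period directly; as you note, $G_q\equiv G_0$ and $SG_{q-1}\equiv 0$ are equivalent, so this is exactly the right fact to transport. The trade-off is as follows: your route is shorter and direct rather than by contradiction, while the paper's chain version establishes its inner claim (if $SG_n$ has period $d$ with $d$ dividing some period of $G_n$, then $G_n$ satisfies $G_n \equiv G_{n+d}$ for all $n$) without using the hypothesis $SG_{p-1}\equiv 0$ at all, and it is that hypothesis-free form which the paper reuses verbatim in the proof of the subsequent theorem treating the case $SG_{p-1}\not\equiv 0 \, \, \op{mod} \, \, m$.
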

\begin{proof} 
Suppose that $\Pi (SG_n, m ) \not = \Pi (G_n, m  )$, namely $\Pi (SG_n, m )$ is a proper divisor of $p= \Pi (G_n, m )$. Let $d =\Pi (SG_n, m ) < p$ and $d \, | \, p.$ Thus, we have that for all $n \geq 0$
\begin{equation}\label{d}
SG_n \equiv  SG_{n+d} \, \, \op{mod} \, \,  m.
\end{equation}
We have that $SG_0 \equiv  SG_d\, \, \op{mod} \, \,  m$  implies $G_1 + \cdots + G_d \equiv 0 \, \, \op{mod} \, \, m$, thus
$$G_1 \equiv -(G_2 + \cdots + G_d ) \, \, \op{mod} \, \, m.$$
Now, $SG_1 \equiv  SG_{1+d}\, \, \op{mod} \, \,  m$  implies $G_2  + \cdots +G_d + G_{1+d} \equiv 0 \, \, \op{mod} \, \, m$, thus
$$G_{1+d} \equiv -(G_2 + \cdots + G_d ) \, \, \op{mod} \, \, m.$$
Therefore we get 
$$G_1 \equiv G_{1+d}  \, \, \op{mod} \, \, m.$$
Let $p= d\times p_0$. Continuing this procedure, we get the following congruences:
\begin{itemize}
\item $G_1 \equiv G_{1+d} \equiv G_{1+2d} \cdots \equiv G_{1+(p_0-1)d} \, \, \op{mod} \, \, m.$
\item $G_2 \equiv G_{2+d} \equiv G_{2+2d} \cdots \equiv G_{2+(p_0-1)d} \, \, \op{mod} \, \, m.$
\item $G_3 \equiv G_{3+d} \equiv G_{3+2d} \cdots \equiv G_{3+(p_0-1)d} \, \, \op{mod} \, \, m.$

       $\cdots \cdots \cdots \cdots \cdots \cdots \cdots \cdots \cdots \cdots \cdots $
       
              $\cdots \cdots \cdots \cdots \cdots \cdots \cdots \cdots \cdots \cdots \cdots $
       
\item $G_{d-1} \equiv G_{2d-1} \equiv G_{3d-1} \cdots \equiv G_{p_0d -1} \, \, \op{mod} \, \, m.$
\item $G_d \equiv G_{2d} \equiv G_{3d} \equiv \cdots \equiv G_{p_0d} =G_p \, \, \op{mod} \, \, m.$
\end{itemize}
Since $G_0\equiv G_p \,\, \op{mod} \, \, m$, the final congruences $G_d \equiv G_{2d} \equiv G_{3d} \equiv \cdots \equiv G_{p_0d} =G_p \, \, \op{mod} \, \, m$ 
become
\begin{itemize}
\item $G_0 \equiv G_d \equiv G_{2d} \equiv G_{3d} \equiv \cdots \equiv G_p \, \, \op{mod} \, \, m.$
\end{itemize}
Hence $\Pi(G_n, m) =d <p$, which contradicts the fact that $\Pi(G_n, m) =p$.
\end{proof}
As in the proof (II) of Lemma \ref{lemma}, if $\Pi(G_n, m)=\Pi(SG_n,m)=p$, then we have $SG_{p-1}  \equiv 0 \, \, \op{mod} \, \, m$. Therefore we get the following
\begin{corollary}Let $G_n$ be a periodic sequence 
modulo $m$ with period $p$. Then we have 
$SG_{p-1}  \equiv 0 \, \, \op{mod} \, \, m$ if and only if $\Pi(SG_n, m) =\Pi(G_n, m).$
\end{corollary}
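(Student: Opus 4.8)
The plan is to prove the two implications separately, since they are statements of quite different character. The forward direction, assuming $SG_{p-1} \equiv 0 \pmod{m}$ and concluding $\Pi(SG_n, m) = \Pi(G_n, m)$, is nothing other than Theorem \ref{key theorem}; so for that half I would simply invoke the theorem and move on.

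The real content is the converse. Here I would assume $\Pi(SG_n, m) = \Pi(G_n, m) = p$ and read off the value of $SG_{p-1} \bmod m$ by comparing the first derived sequence at the indices $0$ and $p$. From $\Pi(SG_n, m) = p$ we get $SG_0 \equiv SG_p \pmod{m}$; expanding the right-hand side gives $SG_p = \sum_{k=0}^{p} G_k = SG_{p-1} + G_p$, whereas the left-hand side is $SG_0 = G_0$. Replacing $G_p$ by $G_0$ using $\Pi(G_n, m) = p$, the congruence $G_0 \equiv SG_{p-1} + G_p \pmod{m}$ collapses to $SG_{p-1} \equiv 0 \pmod{m}$. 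This is exactly the telescoping computation carried out in proof (II) of Lemma \ref{lemma}, stated there only for a general Fibonacci sequence; the same few lines apply verbatim to an arbitrary periodic $G_n$, since they use nothing beyond the two periodicities and the additive splitting $SG_p = SG_{p-1} + G_p$.

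I do not anticipate a genuine obstacle, as both halves reduce to material already present in the excerpt and the corollary is merely their conjunction. The one point I would be careful about is not to conflate the two directions: the forward implication rests on the rigidity argument of Theorem \ref{key theorem}, which is what rules out a proper divisor of $p$ as the period of $SG_n$, whereas the converse is the elementary observation above. I would therefore present them in that order and keep the distinction between $\Pi(SG_n, m) \mid \Pi(G_n, m)$ and $\Pi(SG_n, m) = \Pi(G_n, m)$ explicit throughout.
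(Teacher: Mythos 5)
Your proposal is correct and matches the paper's own treatment: the forward implication is exactly Theorem \ref{key theorem}, and the converse is the telescoping comparison $SG_0 \equiv SG_p = SG_{p-1} + G_p \equiv SG_{p-1} + G_0 \pmod{m}$, which the paper itself obtains by citing proof (II) of Lemma \ref{lemma} applied to an arbitrary periodic sequence. Nothing is missing; your explicit separation of the two directions is precisely how the corollary is justified in the text.
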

Next,  we consider the case when $SG_{p-1}  \not \equiv 0 \, \, \op{mod} m $.
\begin{theorem} Let $G_n$ be a periodic sequence 
modulo $m$ with period $p$. 
If $SG_{p-1}  \not \equiv 0 \, \, \op{mod} \, \, m$, then 
$$\Pi(SG_n,m) = s \times \Pi(G_n,m),$$
where $s$ is the order of $SG_{p-1} (\op{mod} \, m)$ in $\mathbb Z_m$, i.e., $s$ is the smallest non-zero integer such that 
$$ s\times SG_{p-1} \equiv 0 \, \, \op{mod} \, \, m.$$
\end{theorem}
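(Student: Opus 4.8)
The plan is to reduce everything to a single recurrence relating $SG_{n+p}$ to $SG_n$. First I would establish the key identity
$$SG_{n+p} \equiv SG_n + SG_{p-1} \pmod m \quad \text{for all } n \geq 0.$$
This follows by writing $SG_{n+p} = SG_n + \sum_{k=n+1}^{n+p} G_k$ and observing that the $p$ consecutive terms $G_{n+1}, \dots, G_{n+p}$ run through exactly one full period of $G_n$, so their sum is congruent to $\sum_{k=0}^{p-1} G_k = SG_{p-1}$ modulo $m$. Iterating the identity gives
$$SG_{n+tp} \equiv SG_n + t\,SG_{p-1} \pmod m \quad (t \geq 1).$$
Writing $c := SG_{p-1} \bmod m$ and $s := \op{ord}(c)$, this relation is the engine for both divisibility directions. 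I set $P := \Pi(SG_n, m)$.

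Next I would prove the two divisibilities $P \mid sp$ and $p \mid P$. The first is immediate: taking $t = s$ gives $SG_{n+sp} \equiv SG_n + s\,c \equiv SG_n \pmod m$ since $sc \equiv 0$, so $sp$ is a period and hence $P \mid sp$. For the second, I would use $G_n = SG_n - SG_{n-1}$ for $n \geq 1$ together with the $P$-periodicity of $SG_n$ to deduce $G_{n+P} \equiv G_n$ for all $n \geq 1$. Writing $P = qp + r$ with $0 \leq r < p$ and invoking the $p$-periodicity of $G_n$ reduces this to $G_{n+r} \equiv G_n$ for $n \geq 1$; evaluating at $n = p$ and using $G_p \equiv G_0$ shows $G_r \equiv G_0$, so $r$ becomes a period of the whole sequence $G_n$. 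Minimality of $p$ then forces $r = 0$, i.e. $p \mid P$.

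Finally I would combine the two facts. Since $p \mid P$ and $P \mid sp$, I may write $P = t\,p$ with $t \mid s$. On the other hand, $SG_{n+P} \equiv SG_n$ together with $SG_{n+tp} \equiv SG_n + t\,c$ yields $t\,c \equiv 0 \pmod m$, whence $s \mid t$ by the definition of $s$ as the order of $c$. Therefore $t = s$ and $P = sp = s \times \Pi(G_n, m)$, as claimed. The main obstacle is precisely the divisibility $p \mid P$: the recurrence only controls $SG_n$ along arithmetic progressions of common difference $p$, and one must rule out the possibility that the partial sums repeat with a period not divisible by $p$. The telescoping $G_n = SG_n - SG_{n-1}$ resolves this, but it breaks at $n = 0$, and it is exactly there that the equality $G_p \equiv G_0$ must be used to close the argument.
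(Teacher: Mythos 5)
Your proposal is correct and follows essentially the same route as the paper: both hinge on the identity $SG_{n+p} \equiv SG_n + SG_{p-1} \pmod m$ (the paper phrases it via $p$-tuples), on differencing $G_n = SG_n - SG_{n-1}$ to transfer periodicity of $SG_n$ back to $G_n$ (with the same $n=0$ fix via $G_p \equiv G_0$), and on minimality of the order $s$ to pin down the factor. The only difference is packaging: you argue directly through the two divisibilities $p \mid P$ and $P \mid sp$, whereas the paper assumes the period is a proper divisor of $sp$ and derives a contradiction.
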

\begin{proof} First we observe that for all $i \geq 2$
\begin{equation*}
SG_{ip-1} \equiv  SG_{(i-1)p-1} + SG_{p-1} \, \, \op{mod} \, \, m
\end{equation*}
from which we obtain that for all $i \geq 1$
\begin{equation*}
SG_{ip-1} \equiv i \times SG_{p-1} \, \, \op{mod} \, \, m
\end{equation*}
Indeed, the first $p$-tuple is $\{SG_0, SG_1, \cdots, SG_{p-1}\}$ and the second $p$-tuple 
is \newline
$\{SG_p, SG_{p+1}, \cdots, SG_{2p-1} \}$, which is, modulo $m$,  the same as 
$$\{SG_{p-1}+ SG_0, SG_{p-1}+SG_1, \cdots, SG_{p-1}+SG_{p-1}\}.$$
Hence, $SG_{2p-1}\equiv SG_{p-1}+SG_{p-1} \, \, \op{mod} \, \, m$, thus we get 
\begin{equation}\label{eq-2}
SG_{2p-1}\equiv 2 \times SG_{p-1} \, \, \op{mod} \, \, m.
\end{equation}
 The third $p$-tuple
$\{SG_{2p}, SG_{2p+1}, \cdots, SG_{3p-1}\}$ is, modulo $m$, the same as 
$$\{SG_{2p-1}+ SG_0, SG_{2p-1}+SG_1, \cdots, SG_{2p-1}+SG_{p-1}\}.$$
Hence, $SG_{3p-1} \equiv SG_{2p-1} + SG_{p-1}$, thus from Equation (\ref{eq-2}) we get
\begin{equation*}\label{eq-3}
SG_{3p-1}\equiv 3 \times SG_{p-1} \, \, \op{mod} \, \, m.
\end{equation*}
Now, let us suppose that 
\begin{equation}\label{eq-j}
SG_{jp-1}\equiv j \times SG_{p-1} \, \, \op{mod} \, \, m.
\end{equation}
We see that the $(j+1)$-th $p$-tuple 
$\{SG_{jp}, SG_{jp+1}, \cdots, SG_{(j+1)p-1}\}$ is, modulo $m$, the same as 
$$\{SG_{jp-1}+ SG_0, SG_{jp-1}+SG_1, \cdots, SG_{jp-1}+SG_{p-1}\}.$$
Hence, $SG_{(j+1)p-1} \equiv SG_{jp-1} + SG_{p-1} \, \, \op{mod} \, \, m$, and thus from Equation (\ref{eq-j}) we get
$$SG_{(j+1)p-1}\equiv (j+1) \times SG_{p-1} \, \, \op{mod} \, \, m..$$
Hence, by induction, we have that for all $i \geq 1$
$$SG_{ip-1} \equiv i \times SG_{p-1} \, \, \op{mod} \, \, m.$$
Since $ s\times SG_{p-1} \equiv 0 \, \, \op{mod} \, \, m$, we have
$$SG_{ps -1} \equiv \sum_{k=0}^{ps-1} G_k \equiv 0 \, \, \op{mod} \, \, m.$$
As in the proof of Proposition \ref{prop}, we see that for any $n$
$$SG_n \equiv SG_{n+ps} \, \, \op{mod} \, \, m.$$
Hence the period of $SG_n$ is a divisor of $ps$. Suppose that such a divisor is a proper one, denoted by $\delta$. Then, as in the proof of Theorem \ref{key theorem}, we have 
$$G_n \equiv G_{n+\delta} \, \, \op{mod} \, \, m.$$
Since the period of $G_n$ is $p$, $\delta$ has to be a multiple of $p$, thus $\delta = \omega p$ for some non-zero integer $\omega$. Since $\delta =\omega p$ is a proper divisor of $ps$, $\omega$ is a proper divisor of $s$, in particular $\omega < s$. Then,  as in the proof (II) of Lemma \ref{lemma},  $SG_n \equiv GS_{n+\omega p} \, \, \op{mod} \, \, m$ implies that
$$GS_{\omega p-1}= \sum _{k=0}^{\omega p-1}G_k \equiv 0 \, \, \op{mod} \, \, m.$$
In other words
$$\omega \times SG_{p-1} \equiv 0 \, \, \op{mod} \, \, m.$$
This contradicts the fact that $s$ is the smallest non-zero integer such that $s\times SG_{p-1} \equiv 0 \, \, \op{mod} \, \, m.$ Hence the period $\delta$ of $SG_n$ has to be exactly $ps$, i.e., $$\Pi(SG_n,m) = s \times \Pi(G_n,m).$$

\end{proof}
Therefore we get the following theorem:
\begin{theorem}\label{main theorem} Let $p$ be the period of a periodic sequence $G_n$ modulo $m$. Then the period of the partial sum sequence $SG_n$ modulo $m$ is equal to
$$\op{ord} \bigl (SG_{p-1} \op{mod} m \bigr) \times p,$$
where $\op{ord}(x)$ denotes the order of an element $x$ of $\mathbb Z_m$.

In particular, if $m$ is a prime, then the period of the partial sum sequence $SG_n$ modulo $m$ is 
equal to 
$$
\begin{cases}
\quad p \quad \quad \text{if $SG_{p-1} \equiv 0 \,\, \op{mod} \, \, m,$} \\ m \times p  \quad \text{if $SG_{p-1} \not \equiv 0 \,\, \op{mod} \, \, m.$}
\end{cases}
$$
\end{theorem}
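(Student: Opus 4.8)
The plan is to obtain this statement purely as a consolidation of the two theorems already established, organized by a case distinction on whether $SG_{p-1}$ vanishes modulo $m$. The only genuinely new ingredient is the observation that writing the answer as $\op{ord}(SG_{p-1}\op{mod} m)\times p$ unifies the two cases into a single formula, because the additive order of the identity element $0$ of the group $\mathbb Z_m$ equals $1$.

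First I would fix the convention that $\op{ord}(x)$ denotes the order of $x$ in the \emph{additive} group $\mathbb Z_m$, so that $\op{ord}(0)=1$ and, for $x\neq 0$, the value $\op{ord}(x)=s$ is the least positive integer with $s\times x\equiv 0\op{mod} m$. Then the proof splits along two cases. In the case $SG_{p-1}\equiv 0\op{mod} m$, Theorem \ref{key theorem} gives $\Pi(SG_n,m)=\Pi(G_n,m)=p$; since $\op{ord}(0)=1$, the claimed formula reads $1\times p=p$, which agrees. In the case $SG_{p-1}\not\equiv 0\op{mod} m$, the preceding theorem gives $\Pi(SG_n,m)=s\times p$ with $s=\op{ord}(SG_{p-1}\op{mod} m)$, which is exactly the claimed formula. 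Hence in both cases the period of $SG_n$ equals $\op{ord}(SG_{p-1}\op{mod} m)\times p$.

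For the specialization to prime $m$, I would invoke the structure of the additive group $\mathbb Z_m$: when $m$ is prime, $\mathbb Z_m$ is cyclic of prime order, so every nonzero element generates the whole group and therefore has order exactly $m$. Thus if $SG_{p-1}\equiv 0$ the order is $1$ and the period is $p$, while if $SG_{p-1}\not\equiv 0$ the order is $m$ and the period is $m\times p$, giving the stated two-case formula.

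I do not expect a serious obstacle here, since both cases are already handled by the earlier results; the one point requiring care is making explicit that the order is taken in the additive group and that the identity has order $1$, which is precisely what allows the two branches to be packaged as the single multiplicative factor $\op{ord}(SG_{p-1}\op{mod} m)$.
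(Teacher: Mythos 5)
Your proposal is correct and takes essentially the same route as the paper: the paper offers no separate proof of this theorem, presenting it (``Therefore we get the following theorem'') as exactly the consolidation you describe of Theorem \ref{key theorem} (the case $SG_{p-1}\equiv 0$) with the immediately preceding theorem (the case $SG_{p-1}\not\equiv 0$, giving the factor $s=\op{ord}(SG_{p-1}\op{mod} m)$). Your explicit observations that the additive order of $0$ in $\mathbb Z_m$ is $1$, and that every nonzero element of $\mathbb Z_m$ has order $m$ when $m$ is prime, are precisely the implicit facts the paper relies on to merge the two cases into the single formula and to derive the prime specialization.
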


We note that for an element $n \in \mathbb Z_m$, the order $\op{ord}(n)$ of the element $n$ is given by
$$\op{ord}(n)= \frac{\op{LCM}(n,m)}{n}$$
where $\op{LCM}(n,m)$ is the least common multiple of $n$ and $m$.

\begin{ex} Let us consider the following periodic sequence $G_n$ in $\mathbb Z_m$ with $m=15, 30, 36$
$$G_n: \underbrace{20190823}\underbrace{20190823}\underbrace{20190823} \cdots \cdots.$$
Then $\Pi(G_n,m)=8$ for any $m$ and we have that $2+0+1+9+0+8+2+3=25$.
\begin{enumerate}
\item $\Pi(SG_n, 15) = 8 \times \frac{\op{LCM}(25,15)}{25}=8 \times \frac{75}{25}=8\times 3=24.$
\item $\Pi(SG_n, 30) = 8 \times \frac{\op{LCM}(25,30)}{25}=8 \times \frac{150}{25}=8\times 6=48.$
\item $\Pi(SG_n, 36) = 8 \times \frac{\op{LCM}(25,36)}{25}=8 \times \frac{25\times 36}{25}=8\times 36=288.$
\end{enumerate}

\end{ex}
\section{Higher derived general Fibonacci sequences $S^iF(a,b)_n$ }
Given a sequence $A_n$, for a non-negative integer $i$ we define the $i$-th derived sequence inductively as follows:
$$S^iA_n := \sum_{k=0}^n S^{i-1}A_k,  \quad i \geq 1$$
where $S^0A_n := A_n.$

Thus for a general Fibonacci sequence $F(a,b)_n$,  we can consider the $i$-th derived sequence $S^iF(a,b)_n$. 
By tedious computation we can show the following formulas, which are further extensions of Lemma \ref{SF-F}.
\begin{proposition}\label{form-SF-F}  For all integers $a, b$ and $n \geq 0$, the following formulas hold.
\begin{enumerate}
\item $S^2F(a,b)_n = SF(a,b)_{n+2}-a-(n+2)b.$
\item $ \displaystyle S^3F(a,b)_n = S^2F(a,b)_{n+2}-(n+3) a-\frac{(n+2)(n+3)}{2} b.$
\item $ \displaystyle S^4F(a,b)_n = S^3F(a,b)_{n+2}-\frac{(n+3)(n+4)}{2}a-\frac{(n+2)(n+3)(n+4)}{2\times 3} b.$
\end{enumerate}
\end{proposition}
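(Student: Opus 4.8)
The plan is to prove the three identities in the order stated, each by induction on $n$, using the one proved immediately before it as the engine for the inductive step. Everything rests on two facts. The first is the defining difference relation
$$S^iF(a,b)_{n} - S^iF(a,b)_{n-1} = S^{i-1}F(a,b)_{n} \qquad (n \geq 1),$$
which is immediate from $S^iF(a,b)_n = \sum_{k=0}^n S^{i-1}F(a,b)_k$. The second is the formula one level down: for part (1) that input is Lemma \ref{SF-F}, for part (2) it is part (1), and for part (3) it is part (2). I would also record at the outset that $S^iF(a,b)_0 = F(a,b)_0 = a$ for every $i$, which is what every base case $n=0$ will reduce to.

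First I would do part (1). For the base case $n=0$, the left side is $a$, and the right side $SF(a,b)_2 - a - 2b$ is evaluated by Lemma \ref{SF-F} (and a one-line Fibonacci computation giving $SF(a,b)_2 = 2a+2b$). For the inductive step I would write $S^2F(a,b)_{n+1} = S^2F(a,b)_n + SF(a,b)_{n+1}$, substitute the inductive hypothesis for $S^2F(a,b)_n$, collapse $SF(a,b)_{n+3} - SF(a,b)_{n+2} = F(a,b)_{n+3}$ by the defining relation, and feed in Lemma \ref{SF-F} in the form $SF(a,b)_{n+1} = F(a,b)_{n+3} - b$ to land on the target. Parts (2) and (3) follow the identical template: expand $S^iF(a,b)_{n+1}$ via the defining relation, insert the inductive hypothesis, use $S^{i-1}F(a,b)_{n+3} - S^{i-1}F(a,b)_{n+2} = S^{i-2}F(a,b)_{n+3}$ to reduce the order, and recognise the remaining expression as the preceding part evaluated at $n+1$.

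The routine but bookkeeping-heavy point is checking that the polynomial coefficients of $a$ and $b$ line up across the step; this is what makes the direct approach feel like ``tedious computation.'' It becomes transparent once one notices that the three lines are the cases $i=2,3,4$ of the single formula
$$S^iF(a,b)_n = S^{i-1}F(a,b)_{n+2} - \binom{n+i}{i-2}a - \binom{n+i}{i-1}b.$$
With this uniform statement the coefficient matching in the inductive step is exactly Pascal's rule: the increments $\binom{n+1+i}{i-2} - \binom{n+i}{i-2} = \binom{n+i}{i-3}$ and $\binom{n+1+i}{i-1} - \binom{n+i}{i-1} = \binom{n+i}{i-2}$ are precisely the coefficients produced by the level-$(i-1)$ formula evaluated at $n+1$. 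I expect this coefficient arithmetic, together with keeping the two inductions (the outer one on $i$, the inner one on $n$) correctly ordered, to be the only real obstacle; the rest is the telescoping identity applied mechanically.

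An alternative, and conceptually tidier, route is to prove the displayed general formula directly by a double induction on the pair $(i,n)$: the outer base $i=1$ is Lemma \ref{SF-F}, and the inner step is the Pascal computation just described. The one extra wrinkle on this route is the inner base case $n=0$, which requires the small auxiliary facts $S^jF(a,b)_1 = ja+b$ and $S^jF(a,b)_2 = \bigl(1+\binom{j+1}{2}\bigr)a+(j+1)b$; these are themselves quick inductions on $j$ and are avoided entirely if one instead verifies the base cases of parts (1)--(3) by hand. Either way, the three stated identities are immediate specializations.
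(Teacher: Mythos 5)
Your proof is correct, but it takes a genuinely different route from the paper --- indeed, the paper gives no proof of this Proposition at all, dismissing it as ``tedious computation,'' and instead proves the general formula $S^iF(a,b)_n=S^{i-1}F(a,b)_{n+2}-\binom{n+i}{i-2}a-\binom{n+i}{i-1}b$ by induction on $i$ alone, citing the unproven cases $i=1,2,3,4$ as its base. In the paper's inductive step there is no induction on $n$: one sums the level-$j$ formula over $k=0,\dots,n$, reindexes to obtain $S^jF(a,b)_{n+2}-S^{j-1}F(a,b)_0-S^{j-1}F(a,b)_1$ (using $S^{j-1}F(a,b)_0=a$ and $S^{j-1}F(a,b)_1=(j-1)a+b$), and then evaluates the binomial sums $j+\sum_{k=0}^n\binom{k+j}{j-2}$ and $1+\sum_{k=0}^n\binom{k+j}{j-1}$ by iterated, hockey-stick style applications of Pascal's rule. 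Your primary route --- three successive inductions on $n$, each step using the difference relation $S^iF(a,b)_{n+1}-S^iF(a,b)_n=S^{i-1}F(a,b)_{n+1}$ and closing by recognising the previous formula evaluated at $n+1$ --- is sound: the base values ($SF(a,b)_2=2a+2b$, $S^2F(a,b)_2=4a+3b$, $S^3F(a,b)_2=7a+4b$) and the coefficient increments all check out, and your observation that those increments are exactly Pascal's rule, $\binom{n+i+1}{i-2}-\binom{n+i}{i-2}=\binom{n+i}{i-3}$ and $\binom{n+i+1}{i-1}-\binom{n+i}{i-1}=\binom{n+i}{i-2}$, matching the level-$(i-1)$ coefficients at $n+1$, is precisely right. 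What the paper's summation method buys is handling all $n$ at once at each level, at the price of boundary terms and binomial-sum identities; what yours buys is an honest, checkable proof of the three stated identities (filling a gap the paper leaves open) in which each step needs only one application of Pascal's rule, making the ``tedious'' part transparent. Your alternative double induction on $(i,n)$ is in substance the paper's theorem proof with the inner summation replaced by induction on $n$; the auxiliary values you cite, $S^jF(a,b)_1=ja+b$ and $S^jF(a,b)_2=\bigl(1+\binom{j+1}{2}\bigr)a+(j+1)b$, are both correct, the first being exactly the identity the paper itself proves and uses inside its theorem.
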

These formulas are re-expressed as follows:
\begin{align*}
S^2F(a,b)_n  & = SF(a,b)_{n+2}-{n+2 \choose 0} a-{n+2 \choose 1} b\\
& = SF(a,b)_{n+2}-{n+2 \choose 2-2} a-{n+2 \choose 2-1} b.
\end{align*}
\begin{align*}
S^3F(a,b)_n & = S^2F(a,b)_{n+2}-{n+3 \choose 1}  a-{n+3 \choose 2}  b\\
& = S^2F(a,b)_{n+2}-{n+3 \choose 3-2}  a-{n+3 \choose 3-1}  b.
\end{align*}
\begin{align*}
S^4F(a,b)_n & = S^3F(a,b)_{n+2}-{n+4 \choose 2}a-{n+4 \choose 3}  b\\
& = S^3F(a,b)_{n+2}-{n+4 \choose 4-2}  a-{n+4 \choose 4-1}  b.
\end{align*}
So, by these re-expressions, it is natural to think that the following general formula would hold and it turns out that it is the case, i.e., we have
\begin{theorem} For $i \geq 1$ we have 
\begin{equation}\label{gene-form}
S^i F(a,b)_n  = S^{i-1}F(a,b)_{n+2}-{n+i \choose i-2}a-{n+i \choose i-1}  b.
\end{equation}
When $i=1$ we set ${n+1 \choose 1-2} = {n+1 \choose -1}=0$.
\end{theorem}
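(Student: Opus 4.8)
The plan is to prove the identity \eqref{gene-form} by induction on $i$. The base case $i=1$ is exactly Lemma \ref{SF-F}: under the stated convention $\binom{n+1}{-1}=0$ together with $\binom{n+1}{0}=1$, formula \eqref{gene-form} collapses to $SF(a,b)_n = F(a,b)_{n+2}-b$, which is Equation \eqref{SF}.

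For the inductive step, fix $i\ge 2$ and assume \eqref{gene-form} at level $i-1$, i.e. $S^{i-1}F(a,b)_k = S^{i-2}F(a,b)_{k+2}-\binom{k+i-1}{i-3}a-\binom{k+i-1}{i-2}b$ for all $k$. I would start from the definition $S^iF(a,b)_n=\sum_{k=0}^n S^{i-1}F(a,b)_k$, substitute this hypothesis into the summand, and split the result into one ``main'' sum $\sum_{k=0}^n S^{i-2}F(a,b)_{k+2}$ and two binomial sums. The main sum is handled purely by the definition of the derived sequence: reindexing gives $\sum_{k=0}^n S^{i-2}F(a,b)_{k+2}=\sum_{j=2}^{n+2}S^{i-2}F(a,b)_j = S^{i-1}F(a,b)_{n+2}-S^{i-2}F(a,b)_0-S^{i-2}F(a,b)_1$, which is how the shifted term $S^{i-1}F(a,b)_{n+2}$ in \eqref{gene-form} is produced.

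The two binomial sums I would evaluate with the hockey-stick identity, obtaining $\sum_{k=0}^n\binom{k+i-1}{i-3}=\binom{n+i}{i-2}-(i-1)$ and $\sum_{k=0}^n\binom{k+i-1}{i-2}=\binom{n+i}{i-1}-1$; these supply precisely the coefficients $\binom{n+i}{i-2}$ of $a$ and $\binom{n+i}{i-1}$ of $b$ required by \eqref{gene-form}. What remains is to check that all leftover constants cancel, for which I need the two boundary values $S^{i-2}F(a,b)_0=a$ and $S^{i-2}F(a,b)_1=(i-2)a+b$. The first is immediate, and the second follows from the one-line recursion $S^jF(a,b)_1 = S^jF(a,b)_0 + S^{j-1}F(a,b)_1 = a + S^{j-1}F(a,b)_1$ started from $S^0F(a,b)_1=b$. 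Feeding $a$, $(i-2)a+b$, and the subtracted constants $(i-1)$ and $1$ from the hockey-stick sums into the assembled expression, the $a$-constants $-a-(i-2)a+(i-1)a$ and the $b$-constants $-b+b$ both vanish, leaving exactly \eqref{gene-form}.

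The step I expect to be the main obstacle is not the algebra but the edge-case bookkeeping when $i$ is small: for $i=2$ the lower index $i-3=-1$ is negative, so the first binomial sum degenerates and the hockey-stick evaluation must be reconciled by hand with the convention $\binom{n+1}{-1}=0$ (one checks that $\binom{n+2}{0}-1=0$ indeed matches the zero sum). I would verify that every such degenerate binomial behaves consistently with Pascal's rule $\binom{m+1}{k}=\binom{m}{k}+\binom{m}{k-1}$ under this convention. An alternative route that sidesteps the hockey-stick identity is a nested induction, outer on $i$ and inner on $n$, using $S^iF(a,b)_{n+1}=S^iF(a,b)_n+S^{i-1}F(a,b)_{n+1}$ together with the level-$(i-1)$ hypothesis, the telescoping $S^{i-1}F(a,b)_{n+3}-S^{i-1}F(a,b)_{n+2}=S^{i-2}F(a,b)_{n+3}$, and a single application of Pascal's rule; there the only numerical input is the base value $S^{i-1}F(a,b)_2-S^{i-1}F(a,b)_0=\binom{i}{2}a+ib$, and Pascal's rule interacts more robustly with the negative-index convention.
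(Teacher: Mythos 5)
Your proposal is correct and follows essentially the same route as the paper: induction on $i$ starting from Lemma \ref{SF-F}, substitution of the level-$(i-1)$ hypothesis into the defining sum, reindexing the main sum to produce $S^{i-1}F(a,b)_{n+2}$ minus the boundary values $S^{i-2}F(a,b)_0=a$ and $S^{i-2}F(a,b)_1=(i-2)a+b$, and evaluation of the two binomial sums (your hockey-stick identity is precisely what the paper establishes by telescoping Pascal's rule). The only cosmetic differences are that the paper folds the leftover constants $j$ and $1$ into the binomial sums before applying Pascal's rule rather than cancelling them at the end, and that it cites the already-verified cases $i=1,2,3,4$ as its base rather than just $i=1$.
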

\begin{proof} The proof is of course 
by induction. Since the formula is already proved in the cases when $i=1,2,3,4$ as above, we suppose that the above formula (\ref{gene-form}) holds for $i=j$:
\begin{equation}\label{gene-form-j}
S^j F(a,b)_n  = S^{j-1}F(a,b)_{n+2}-{n+j \choose j-2}a-{n+j \choose j-1}  b
\end{equation}
 and we show the formula for $i=j+1$:
\begin{equation}\label{gene-form-j+1}
S^{j+1} F(a,b)_n  = S^jF(a,b)_{n+2}-{n+j+1 \choose j-1}a-{n+j+1 \choose j}  b.
\end{equation}
First we have
\begin{align*}
& S^{j+1} F(a,b)_n  \\
& = \sum_{k=0}^n S^jF(a,b)_k \quad \text{(by the definition of $S^{j+1} F(a,b)_n$ )} \\
& = \sum_{k=0}^n \left \{ S^{j-1}F(a,b)_{k+2}-{k+j \choose j-2}a-{k+j \choose j-1}  b \right \} \quad \text{(by Equation (\ref{gene-form-j}))}\\
& = \sum_{k=0}^n S^{j-1}F(a,b)_{k+2}-\sum_{k=0}^n{k+j \choose j-2}a- \sum_{k=0}^n{k+j \choose j-1}  b \\
& = \left (\sum_{k=0}^{n+2} S^{j-1}F(a,b)_k - S^{j-1}F(a,b)_0 - S^{j-1}F(a,b)_1 \right) 
-\sum_{k=0}^n {k+j \choose j-2}a- \sum_{k=0}^n {k+j \choose j-1}b \\
& = S^jF(a,b)_{n+2} - S^{j-1}F(a,b)_0 - S^{j-1}F(a,b)_1 -\sum_{k=0}^n {k+j \choose j-2}a- \sum_{k=0}^n {k+j \choose j-1}b.
\end{align*}
Here we observe that for all $j \geq 1$ we have $S^{j-1}F(a,b)_0=a$, which is obvious, and 
\begin{equation}\label{SF-F-1}
S^{j-1}F(a,b)_1 = (j-1)a+b.
\end{equation}
Equation (\ref{SF-F-1}) can be seen by induction as follows. If $j=1$, then $S^{j-1}F(a,b)_1 =S^0F(a,b)_1=F(a,b)_1 =b$.
So, suppose that $S^{j-1}F(a,b)_1 = (j-1)a+b$ holds and we show that $S^jF(a,b)_1 = ja+b$. Indeed,
\begin{align*}
S^jF(a.b)_1 & = S^{j-1}F(a,b)_0+ S^{j-1}F(a,b)_1 \quad \text{(by the definition of $S^jF(a.b)_1$)}\\
& = a + (j-1)a+b \quad \text {(by the above)}\\
&= ja+b.
\end{align*}
Hence we have
\begin{align*}
& S^{j+1} F(a,b)_n  \\
& = S^jF(a,b)_{n+2} - a - \{(j-1)a +b\} -\sum_{k=0}^n {k+j \choose j-2}a- \sum_{k=0}^n {k+j \choose j-1}b\\
& = S^jF(a,b)_{n+2} - \left \{ j + \sum_{k=0}^n {k+j \choose j-2} \right \} a - \left \{1+ \sum_{k=0}^n {k+j \choose j-1}\right \} b.
\end{align*}
We want to show
\begin{equation}\label{equ-1}
j + \sum_{k=0}^n {k+j \choose j-2} = {n+j+1 \choose j-1},
\end{equation}
\begin{equation}\label{equ-2}
1+ \sum_{k=0}^n {k+j \choose j-1}= {n+j+1 \choose j}.
\end{equation}
To show these, we recall the following formula (Pascal's Rule):
$${n-1 \choose k-1} + {n-1 \choose k} = {n \choose k}.$$
Then we have
\begin{align*}
& j + \sum_{k=0}^n {k+j \choose j-2} \\
& = {j \choose 1} + \sum_{k=0}^n {j+k \choose 2+k} \\
& = \underbrace{{j \choose 1} + {j \choose 2}} + {j+1 \choose 3} + {j+2 \choose 4}  + \cdots + {j+n \choose 2+n}  \\
& ={j+1 \choose 2} + {j+1 \choose 3} + {j+2 \choose 4}  + \cdots + {j+n \choose 2+n} \quad \text{(using Pascal's Rule)} \\
& = \underbrace{{j+1 \choose 2} + {j+1 \choose 3}} + {j+2 \choose 4}  + \cdots + {j+n \choose 2+n}  \\
& = {j+2 \choose 3}  + {j+2 \choose 4}  + {j+3 \choose 5} + \cdots + {j+n \choose 2+n}  \\
& = \cdots \cdots \cdots \cdots \cdots \cdots \cdots \cdots \cdots \cdots \cdots    \quad \text{(using Pascal's Rule step by step )} \\
& = {j+n \choose 1+n} +{j+n \choose 2+n}\\
& = {j+n+1 \choose 2+n} \quad \text{(using Pascal's Rule)}\\
& = {j+n+1 \choose j-1}.
\end{align*}
Thus we get Equation (\ref{equ-1}).
Similarly, using Pascal's Rule step by step we get
\begin{align*}
1+ \sum_{k=0}^n {k+j \choose j-1} &  = 1+ \sum_{k=0}^n {j+k \choose 1+k} \\
& = {j \choose 0} + {j \choose 1} + {j+1 \choose 2} + {j+2 \choose 3}  + \cdots + {j+n \choose 1+n}  \\
& = \cdots \cdots \cdots \cdots \cdots \cdots \cdots \cdots \cdots \cdots \cdots    \\
& = \cdots \cdots \cdots \cdots \cdots \cdots \cdots \cdots \cdots \cdots \cdots    \\
& = {j+n \choose n} +{j+n \choose 1+n}\\
& = {j+n+1 \choose 1+n} \quad \text{(using Pascal's Rule)}\\
& = {j+n+1 \choose j}.
\end{align*}
Thus we get Equation (\ref{equ-2}).
\end{proof}

\begin{ex} \label{ex2} For each case of the above Example \ref {ex1}
\begin{enumerate}
\item In the case when $m=2$:

$\, \, \, \, \, \, \, F_n: \, \, \underbrace{011}\underbrace{011}\underbrace{011}\underbrace{011}\cdots \cdots$ 

$\, \, \, SF_n: \, \, \underbrace{010}\underbrace{010}\underbrace{010}\underbrace{010}\underbrace{010}\cdots \cdots$ 

$S^2F_n: \, \, \underbrace{011100}\underbrace{011100}\underbrace{011100}\underbrace{011100}\cdots \cdots$ 

$S^3F_n: \, \, \underbrace{010111101000}\underbrace{010111101000}\underbrace{010111101000}\underbrace{010111101000}\cdots \cdots$ 

$S^4F_n: \, \, \underbrace{011010110000}\underbrace{011010110000}\underbrace{011010110000}\underbrace{011010110000}\cdots \cdots$ 

\item In the case when $m=3$:

$\, \, \, \, \, \, \, F_n:  \, \, \underbrace{01120221}\underbrace{01120221}\underbrace{01120221}\underbrace{01120221}\cdots \cdots$ 

$\, \, \, SF_n: \, \,  \underbrace{01211020}\underbrace{01211020}\underbrace{01211020}\underbrace{01211020}\cdots \cdots$ 

$S^2F_n: \, \, \underbrace{010122111212002220201100}\underbrace{010122111212002220201100}\cdots \cdots$ 

$S^3F_n: \, \, \underbrace{011210120202221022112000}\underbrace{011210120202221022112000}\cdots \cdots$ 

$S^4F_n: \, \, {\small \underbrace{0121220221102122101211111202001002210200212022222201011211002101102010000}} \cdots $ \\
\end{enumerate}
\end{ex}
From Theorem \ref{main theorem} we get the following:

\begin{theorem} We let $p_i=\Pi \Bigl (S^iF(a,b)_n, m \Bigr)$ with $p_0= \Pi \Bigl (F(a,b)_n, m \Bigr)$.
Then we have 
$$\Pi \Bigl (S^{i+1}F(a,b)_n, m \Bigr) = \op{ord} \Bigl (S^iF(a,b)_{p_i-1} \op{mod} m \Bigr) \times \Pi \Bigl (S^iF(a,b)_n, m \Bigr).$$
In particular, if $m$ is a prime, we have
$$
\Pi \Bigl (S^{i+1}F(a,b)_n, m \Bigr) =
\begin{cases}
\quad \Pi \Bigl (S^iF(a,b)_n, m \Bigr) \quad \quad \text{if $S^iF(a,b)_{p_i-1}  \equiv 0 \,\, \op{mod} \, \, m,$} \\
m \times \Pi \Bigl (S^iF(a,b)_n, m \Bigr) \quad \text{if $S^iF(a,b)_{p_i-1}  \not \equiv 0 \,\, \op{mod} \, \, m.$}
\end{cases}
$$
\end{theorem}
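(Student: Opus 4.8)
The plan is to read the statement as a direct specialization of Theorem \ref{main theorem}, because the $(i+1)$-st derived sequence is exactly the first derived (partial sum) sequence of the $i$-th derived sequence. Concretely, I would set $G_n := S^iF(a,b)_n$; then by the defining recursion $S^{i+1}F(a,b)_n = \sum_{k=0}^n S^iF(a,b)_k$ we have $SG_n = S^{i+1}F(a,b)_n$. Hence, as soon as $G_n$ is known to be a periodic sequence modulo $m$, Theorem \ref{main theorem} applies verbatim to $G_n$ and computes the period of $SG_n = S^{i+1}F(a,b)_n$ in terms of the period $p_i$ of $G_n$. So the whole task reduces to (a) guaranteeing periodicity at each level and (b) substituting into the master formula.

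For (a) I would argue by induction on $i$. The base case $i=0$ is the periodicity of $F(a,b)_n$ modulo $m$, which is the general Fibonacci periodicity theorem recorded earlier in Section 2. For the inductive step, assume $S^iF(a,b)_n$ is periodic with period $p_i$. The proof of Theorem \ref{main theorem} (via Proposition \ref{prop} in the vanishing case) exhibits a concrete $P$ with $SG_n \equiv SG_{n+P}\ \op{mod} m$ for all $n$; this already shows that $SG_n = S^{i+1}F(a,b)_n$ is periodic, so $p_{i+1}$ is well defined and the induction proceeds. Thus every $S^iF(a,b)_n$ is periodic and each $p_i$ makes sense.

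With periodicity secured, I would simply invoke Theorem \ref{main theorem} with $G_n = S^iF(a,b)_n$ and $p = p_i$. The element whose order controls the new period is $SG_{p_i-1} = \sum_{k=0}^{p_i-1} S^iF(a,b)_k = S^{i+1}F(a,b)_{p_i-1}$, the sum of $S^iF(a,b)_n$ over one full period, giving
\begin{equation*}
\Pi\Bigl(S^{i+1}F(a,b)_n, m\Bigr) = \op{ord}\Bigl(S^{i+1}F(a,b)_{p_i-1}\op{mod} m\Bigr)\times p_i .
\end{equation*}
For the prime case I would use that $\Z_m$ with $m$ prime is a field, so every nonzero element has additive order exactly $m$ while $0$ has order $1$; substituting $\op{ord} = m$ or $\op{ord} = 1$ according to whether $S^{i+1}F(a,b)_{p_i-1}$ is or is not $\equiv 0 \ \op{mod} m$ yields the two displayed cases.

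The honest remark is that there is no deep obstacle here: all the real content—both that the partial sum of a periodic sequence is again periodic and the exact value of its period—is already packaged in Theorem \ref{main theorem}. The only care required is the iterative bookkeeping that keeps the hypothesis ``$G_n$ periodic'' inherited from level $i$ to level $i+1$ (so that $p_i$ exists for all $i$), together with correctly identifying the controlling element as the \emph{one-period sum} $S^{i+1}F(a,b)_{p_i-1}$ produced by a single further application of $S$, rather than a single entry of the $i$-th sequence; pinning down that identification is the one place where a sign or index slip would be easy to make.
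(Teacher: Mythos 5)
Your proposal is correct and is essentially the paper's own argument: the paper gives no separate proof at all, introducing the theorem only with the phrase ``From Theorem \ref{main theorem} we get the following,'' which is exactly your substitution $G_n = S^iF(a,b)_n$, $SG_n = S^{i+1}F(a,b)_n$, $p = p_i$. Your point (a), that periodicity must be propagated inductively so that each $p_i$ is well defined (using Theorem 2.3 as the base case and the congruence $SG_n \equiv SG_{n+ps}$ from the proof of Theorem \ref{main theorem} for the inductive step), is a small gap in the paper's presentation that you fill correctly.

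One further remark, in your favor. Your identification of the controlling element as the one-period sum $S^{i+1}F(a,b)_{p_i-1} = \sum_{k=0}^{p_i-1}S^iF(a,b)_k$ is the correct output of Theorem \ref{main theorem}, and it does \emph{not} agree with the theorem as printed, which writes $\op{ord}\bigl(S^iF(a,b)_{p_i-1} \op{mod} m\bigr)$. The printed index is a slip, as a concrete check shows: for $m=2$ and $i=1$ one has $SF_n \equiv 0,1,0,0,1,0,\dots$ with $p_1=3$ and $SF_2 \equiv 0 \,\op{mod}\, 2$, so the printed formula would predict period $1\times 3=3$ for $S^2F_n$, whereas Example \ref{ex2} shows $S^2F_n$ has period $6$; your formula gives $\op{ord}\bigl(S^2F_2 \op{mod} 2\bigr)\times 3 = 2\times 3 = 6$, which is correct. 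So the one place you flagged as ``easy to get wrong'' is precisely where the paper's statement itself slipped, and your version is the one that should stand.
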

\vspace{0.5cm}

\noindent
{\bf Acknowledgements}. The author would like to thank Kazuya Hamada for providing him a question which led to this present work, Marc Renault and Osamu Saeki for useful comments, the referee for his/her thorough reading and useful comments and suggestions, and Bruce Landman for his thorough editorial work. The author is supported by JSPS KAKENHI Grant Numbers JP19K03468.


\end{document}